\def\R{\mathbb R}
\def\CC{\overline{\mathbb C}}
\def\C{\mathbb C}
\def\N{\mathbb N}
\def\card{\operatorname{card}}
\def\deg{\operatorname{deg}}
\def\dim{\operatorname{dim}}
\def\diam{\operatorname{diam}}
\newtheorem{lemma}{Lemma}
\newtheorem{theorem}{Theorem}
\theoremstyle{definition}
\theoremstyle{remark}
\newtheorem*{remark}{Remark}
\numberwithin{equation}{section}
\begin{document}

\title{The size of Julia sets of quasiregular maps}

\author{Walter Bergweiler}
\address{Mathematisches Seminar,
Christian--Albrechts--Universit\"at zu Kiel,
Lude\-wig--Meyn--Str.~4,
D--24098 Kiel,
Germany}
\email{bergweiler@math.uni-kiel.de}
\thanks{Supported by the ESF Networking Programme HCAA}
\subjclass[2010]{Primary 37F10; Secondary 30C65, 30D05}
\begin{abstract}
Sun Daochun and Yang Lo have shown that many 
results of the Fatou-Julia iteration theory of rational functions 
extend to  quasiregular self-maps of the Riemann sphere 
for which the degree exceeds the dilatation. 
We show that in this context, in contrast to the case of rational functions,
the Julia set may have Hausdorff dimension zero.
On the other hand, we exhibit a gauge function depending on the degree
and the dilatation such
that the Hausdorff measure with respect to this gauge function
is always positive, but may be finite.
\end{abstract}
\maketitle

\section{Introduction}
Sun Daochun and Yang Lo \cite{Sun99, Sun00, Sun01} have extended many results
of the Fatou-Julia iteration theory of rational functions to quasiregular
maps $f\colon \CC\to\CC$ for which the degree $\deg(f)$ exceeds the dilatation
$K(f)$.
Here $\CC=\C\cup\{\infty\}$ is the Riemann sphere.
The key idea is to define the Julia set $J(f)$ of such a map $f$ not via
non-normality but as the set of all points $z$ such that for all
neighborhoods $U$ of $z$ the forward orbit
\[
O^+_f(U)=\bigcup_{k\ge 0} f^k(U)
\]
misses at most two points of the sphere; that is,
\[
J(f)=\left\{z\in\CC\colon 
\card\!\left(\CC\backslash O^+_f(U)\right)\leq 2 \text{ for all 
neighborhoods } U \text{ of } z\right\}.
\]
Here $\card X$ denotes the cardinality of a set~$X$.

For example, Sun and Yang \cite[Theorem~9]{Sun00} proved
that if 
$z\in J(f)$, then the backward orbit
\[
O^-_f(z)=\bigcup_{k\ge 0} f^{-k}(z)
=\bigcup_{k\ge 0} \{\zeta\in\CC\colon f^{k}(\zeta)=z\}
\]
is dense in $J(f)$; that is, $J(f)=\overline{O^-_f(z)}$.
Also, the exceptional set 
\[
E(f)=\left\{z\in\CC\colon O^-_f(z) \text{ is finite} \right\}
\]
contains at most two points, and we have $J(f)\cap E(f)=\emptyset$
and $J(f)\subset\overline{O^-_f(z)}$ for all $z\in\CC\backslash E(f)$.
Many other results of complex dynamics have been extended by
Sun and Yang to quasiregular self-maps of the Riemann sphere $\CC=S^2$
satisfying $\deg(f)>K(f)$; see also~\cite[\S 5]{Bergweiler10} for an 
exposition of some of their results.

An extension of the theory
to quasiregular maps $f\colon S^n\to S^n$, where $n\ge 2$ and $S^n$
is the $n$-sphere, was given in \cite{B12}. 
Here the Julia set consists of all points such that 
$S^n\backslash O^+_f(U)$ has capacity zero for all neighborhoods $U$
and the essential hypothesis is
that the degree exceeds the inner dilatation $K_I(f)$.
It turns out that for $n=2$ these two definitions yield the same set;
cf.\ the remark at the end of section~\ref{proof1}, as well
as~\cite[Section~6]{BergweilerNicks}.

A well-known result of Garber \cite{Garber78}
says that the Julia set of a rational function has positive Hausdorff dimension.
This result was extended by Fletcher and Nicks~\cite{Fletcher11a}
to uniformly quasiregular maps $f\colon S^n\to S^n$. These are, by 
definition, maps such that all iterates are $K$-quasiregular for some
common~$K$.
Also, 
if a quasiregular map
$f\colon S^n\to S^n$ with $\deg(f)>K_I(f)$ is Lipschitz continuous, then
$J(f)$ has positive Hausdorff dimension \cite[Theorem~1.7]{B12}.

The main purpose of this note is to show that 
Garber's result does not extend to the quasiregular setting
without additional hypotheses like uniform
quasiregularity or Lipschitz continuity. We
will actually estimate the Hausdorff measure of the Julia set with respect
to certain gauge functions. We introduce this concept only briefly and
refer to Falconer's book~\cite{Falconer97} for more 
details. 
For $\varepsilon>0$
a continuous,
non-decreasing function $h\colon (0,\varepsilon]\to (0,\infty)$ satisfying
$\lim_{t\to 0}h(t)=0$
is called a gauge function (or dimension function).
The (Euclidean) diameter of a subset $X$ of $\R^n$
is denoted by $\diam X$.
The Hausdorff measure $H_h(A)$ is then defined by
\[
H_h(A)=\lim_{\delta\to 0} \ \inf_{(A_i)}\left\{\sum^\infty_{i=1}h(\diam A_i)\colon
\bigcup^\infty_{i=1}A_i\supset A, \diam (A_i)<\delta\right\}.
\]
It was shown in \cite[Theorem~1.8]{B12} that if $f\colon S^n\to S^n$ is
a quasiregular map 
satisfying $\deg(f)>K_I(f)$ such that the branch set does not
intersect the Julia set, then 
$J(f)$ has positive capacity;
 see section~\ref{prelim} for the definition of the branch set.
In the proof it was actually shown that
\begin{equation}\label{1z}
H_h(J(f))>0
\quad\text{for}\quad
h(t)=\left(\log \frac{1}{t}\right)^{\displaystyle\tfrac{(1-n)\log \deg(f)}{\log K_I(f)}}.
\end{equation}
A result of Wallin \cite{Wallin77} implies that then
$J(f)$ has positive capacity.

First we show that in dimension~$2$,
which is the case considered by Sun and Yang,
the conclusion~\eqref{1z} holds without an additional
hypothesis on the branch set.
Note that in the $2$-dimensional case
the branch set of a quasiregular map is discrete.
As $\CC$ is compact, the branch set of a quasiregular map $f\colon \CC\to\CC$
is actually finite. This simplifies  certain aspects considerably;
cf.~\cite[Section~6]{BergweilerNicks}.
\begin{theorem} \label{thm1}
Let $f\colon \CC\to \CC$ be a quasiregular map satisfying $\deg(f)>K(f)$.
If $\xi\in\CC\backslash E(f)$, then
\begin{equation}\label{1y}
H_h\!\left(\overline{O^-_f(\xi)}\right)>0
\quad\text{for}\quad
h(t)=\left(\log \frac{1}{t}\right)^{\displaystyle -\tfrac{\log \deg(f)}{\log K(f)}}.
\end{equation}
In particular, $H_h(J(f))>0$. Moreover,
$\overline{O^-_f(\xi)}$ and $J(f)$ have positive capacity. 
\end{theorem}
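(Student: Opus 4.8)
The plan is to adapt the proof of \cite[Theorem~1.8]{B12}, the crucial simplification in dimension two being that the branch set $B_f$ of $f$ is \emph{finite}. I begin with reductions. Since $J(f)\subset\overline{O^-_f(\xi)}$ for every $\xi\in\CC\setminus E(f)$, monotonicity of $H_h$ shows that it suffices to prove $H_h(J(f))>0$; this yields $H_h(\overline{O^-_f(\xi)})>0$ for all such $\xi$, in particular the displayed bound for $J(f)$ itself, and the two assertions about positive capacity then follow from Wallin's theorem \cite{Wallin77} (applied as for \eqref{1z}) together with monotonicity of capacity. As $J(f)$ is infinite, $B_f$ finite and $J(f)\cap E(f)=\emptyset$, fix $z_0\in J(f)\setminus B_f$.

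Next I construct a Cantor-type subset of $J(f)$ by iterated pull-back. Using the blowing-up property of the Julia set, choose a small ball $B_0=B(z_0,r_0)$ with $\overline{B_0}\cap B_f=\emptyset$ and an integer $m_0$ with $g:=f^{m_0}$ satisfying $g(B_0)\supset B^\ast$ for some ball $B^\ast\supset\overline{B_0}$; put $M:=\operatorname{mod}(B^\ast\setminus\overline{B_0})>0$. Pulling back repeatedly, for each $k$ one obtains a finite family of pairwise disjoint open disks $P_{k,j}$, namely the components of $g^{-k}(B_0)$ that survive a pruning in which, at each step, one discards the boundedly many new components that contain a branch point of $g$ (of which there are only finitely many). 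Since the number of components grows geometrically, this removes only a fixed proportion overall, so generation $k$ contains at least $c_0\,(\deg f)^{m_0 k}$ pieces for a fixed $c_0>0$, and on each surviving piece $g^{k}$ is proper and locally injective. One keeps track, for each $P_{k,j}$, of the component $A_{k,j}$ of $g^{-k}(B^\ast\setminus\overline{B_0})$ surrounding it (so $g^{k}\colon A_{k,j}\to B^\ast\setminus\overline{B_0}$), and arranges the $P_{k,j}$ to be compactly nested, so that $E_\infty=\bigcap_{k}\bigcup_{j}\overline{P_{k,j}}$ is well defined; since each $P_{k,j}$ meets $g^{-k}(z_0)\subset O^-_f(z_0)\subset J(f)$, one gets $E_\infty\subset\overline{O^-_f(z_0)}=J(f)$.

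The quantitative core is a size estimate together with the mass distribution principle. The modulus inequality for quasiregular maps (Poletsky) bounds the modulus of an image curve family from below by $K_I(g^k)^{-1}$ times that of the original, \emph{irrespective of local degrees}, so $\operatorname{mod}(A_{k,j})\le K_I(g^k)\,M\le K(f)^{m_0 k}\,M$; since $A_{k,j}$ separates $\overline{P_{k,j}}$ from a set whose spherical diameter is bounded below (part of what the construction must ensure), the modulus estimate for ring domains gives
\[
\diam P_{k,j}\ \ge\ \exp\!\left(-C\,K(f)^{m_0 k}\right)
\]
with $C$ depending only on the construction. Equip $E_\infty$ with a probability measure $\mu$ assigning mass at most $C_0\,(\deg f)^{-m_0 k}$ to each generation-$k$ piece (possible since the tree is regular up to boundedly many exceptions per generation). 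Using $K(f)^{\log\deg(f)/\log K(f)}=\deg(f)$ one then checks that, for a ball $B(x,\delta)$ and $k$ chosen (as is possible precisely because the exponent of $h$ equals $-\log\deg(f)/\log K(f)$) so that $K(f)^{m_0 k}\asymp\log\tfrac1\delta$ with $\delta$ below the minimal diameter of a generation-$k$ piece, whence $B(x,\delta)$ meets only $O(1)$ of them,
\[
\mu\bigl(B(x,\delta)\bigr)\ \le\ C'\,(\deg f)^{-m_0 k}\ =\ C'\bigl(K(f)^{m_0 k}\bigr)^{-\log\deg(f)/\log K(f)}\ \le\ C''\Bigl(\log\tfrac1\delta\Bigr)^{-\log\deg(f)/\log K(f)}=C''\,h(\delta).
\]
By the mass distribution principle, $H_h(J(f))\ge H_h(E_\infty)>0$.

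The main obstacle is the construction itself: one must control the geometry of the pieces $P_{k,j}$ --- that only boundedly many per generation are branch-affected, that they stay compactly nested with a controlled number of children, and, above all, that they remain ``round'' enough to have bounded overlap, so that a small ball meets only $O(1)$ pieces of the relevant generation --- \emph{uniformly over all $k$}. It is precisely here that finiteness of $B_f$, a feature of dimension two, can be exploited in place of the hypothesis $B_f\cap J(f)=\emptyset$ required in \cite[Theorem~1.8]{B12}; in higher dimensions $B_f$ may be too large for such a direct pruning to work.
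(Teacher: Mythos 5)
Your outline is a genuinely different route from the paper's: you pull back under a high iterate $g=f^{m_0}$, prune away branch-affected components, and control sizes via the conformal modulus of separating ring domains, whereas the paper pulls back under $f$ itself, never prunes, counts components directly with the Riemann--Hurwitz formula (Lemmas~\ref{lemma2}--\ref{lemma3a}), and controls sizes via the H\"older estimate~\eqref{5a}, which places a concrete chordal ball $D_\chi(a_{m,j},r_m)\subset U_{m,j}$ inside each piece. The measure is then obtained as a weak$^*$ limit of counting measures. These two strategies are not just cosmetically different: the H\"older estimate is precisely what makes the covering argument go through.

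The gap you flag yourself --- ``roundness'' of the $P_{k,j}$, so that a small ball meets only $O(1)$ generation-$k$ pieces --- is real and, with the modulus-based setup, I do not see how to close it. The pieces $P_{k,j}$ are preimages under $g^k$, whose dilatation may be as large as $K^{m_0k}$, so they are quasidisks of unbounded eccentricity; a lower bound on $\diam P_{k,j}$ coming from a modulus estimate of the surrounding annulus $A_{k,j}$ does not by itself prevent many highly distorted pieces from threading through a single small ball. The paper avoids this entirely: from~\eqref{5c}, \emph{any} ball $D_\chi(z,r)$ of radius $r\le r_l/2$ that meets a level-$(k+l)$ point $a$ is contained in the \emph{single} piece $U_{l,j}(f^l(a))$, so the relevant set of points injects under $f^l$ into $A_k$, giving the clean bound $\card(A_{k+l}\cap D_\chi(z,r))\le N_k$ without any roundness hypothesis.

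A second, smaller gap is the counting after pruning. You assert that discarding the boundedly many branch-containing components ``removes only a fixed proportion overall,'' but that is not automatic: starting with one disk $B_0$, the recursion $N_{k+1}\ge N_k\deg(g)-O(1)$ only produces geometric growth once $N_k$ exceeds a threshold, and it can in principle stall if the orbit passes through critical points of maximal multiplicity. The paper handles this by establishing $N_6\ge 3$ via Lemma~\ref{lemma4} and then deriving $N_m-2\ge d^{m-6}(N_6-2)$ from $N_{m+1}-2\ge d(N_m-2)$; your argument would need an analogous starting bound strictly larger than the per-step loss (which is roughly doubled, since you lose both to Riemann--Hurwitz and to pruning). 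Finally, your pruning is in fact unnecessary for the result: the paper makes no attempt to keep $g^k$ injective on the pieces, since Lemma~\ref{lemma3a} counts components correctly in the presence of branching, and the H\"older estimate for the size of the inscribed ball holds regardless of whether the pullback is branched.
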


A quasiregular map $f\colon \R^n\to \R^n$ is said to be of polynomial type if
\[
\lim_{x\to \infty}|f(x)|=\infty.
\]
Identifying $S^n$ with $\R^n\cup\{\infty\}$ by stereographic projection, 
a quasiregular map $f\colon \R^n\to \R^n$ of polynomial type
extends to a quasiregular self-map of $S^n$ by putting $f(\infty)=\infty$.
In particular, quasiregular maps $f\colon \C\to \C$ of polynomial type
extend to quasiregular self-maps of $\CC$.
Fletcher and Nicks~\cite{Fletcher11}
have studied the dynamics of quasiregular self-maps of $\R^n$
of polynomial type and shown
that if the degree exceeds the inner dilatation, then $\infty$ is an attracting
fixed point and the boundary of its attracting basin has many properties usually
associated with Julia sets.

Here we only note that for such maps the Julia set is contained in the set
\[BO(f)=\{x\in \R^n\colon (f^k(x))\mbox{ is bounded}\}\]
of points with bounded orbits. (In complex dynamics this set is called the
filled Julia set and usually denoted by $K(f)$, but we reserve the notation
$K(f)$ for the dilatation.) 
We show that the estimate in Theorem~\ref{thm1} is sharp.

\begin{theorem} \label{thm2}
For all $K\in (1,2)$ 
there exists a quasiregular map $f\colon \C\to \C$ of polynomial type
with $\deg(f)=2$
and $K(f)= K$ such that 
\begin{equation}\label{1a}
H_h(J(f))\le H_h(BO(f))<\infty
\quad\text{for}\quad
h(t)=\left(\log\frac{1}{t}\right)^{-\displaystyle \tfrac{\log 2}{\log K}} .
\end{equation}
In particular, $J(f)$ and $BO(f)$ have Hausdorff dimension~$0$.
\end{theorem}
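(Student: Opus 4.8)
The plan is to construct $f$ explicitly as a quasiregular perturbation of the polynomial $z\mapsto z^2$, arranged so that $BO(f)$ is a Cantor-type set whose geometry at scale $n$ is controlled by the iterated dilatation $K^n$. The guiding picture is the standard one for $z^2+c$ with $c$ large: there is a round disc $D$ such that $f^{-1}(D)$ consists of two components $D_0,D_1$ compactly contained in $D$, and $BO(f)=\bigcap_{n\ge 0}f^{-n}(D)$ is the attractor of the resulting ``iterated function system'' of the two inverse branches. For a conformal map the two inverse branches contract by a definite factor and one gets a set of positive dimension; the point here is to make the inverse branches contract \emph{super-exponentially}, at a rate governed by $K$, so that the $n$-th generation pieces have diameter roughly $\exp(-c\,K^{-n})$ (equivalently $\lambda^{K^n}$ for some $\lambda\in(0,1)$), which is exactly the scale at which the gauge $h(t)=(\log(1/t))^{-\log2/\log K}$ assigns mass comparable to $2^{-n}$ to each of the $2^n$ pieces. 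Summing $2^n\cdot 2^{-n}$ over a cover by generation-$n$ pieces then gives a \emph{finite} upper bound for $H_h(BO(f))$, and since $J(f)\subset BO(f)$ the same bound applies to $H_h(J(f))$; the Hausdorff dimension statement is immediate because $h(t)/t^s\to\infty$ as $t\to0$ for every $s>0$.

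Concretely, I would build $f$ as follows. Fix a large disc $D=\{|z|<R\}$ and two disjoint small discs $D_0,D_1\subset D$. On the complement of a neighbourhood of $D_0\cup D_1$, let $f$ agree with the conformal map $z\mapsto z^2$ pushed out to large modulus (so that $f$ is of polynomial type, $\deg f=2$, $f(\infty)=\infty$, and $|z|>R\Rightarrow |f(z)|>|z|$, giving $\infty$ attracting and $BO(f)\subset D$). Inside each $D_j$ put a quasiconformal homeomorphism onto $D$ which maps a tiny sub-disc of radius $\rho$ onto $D$; by choosing the quasiconformal interpolation in the annular region between $D_j$ and its tiny sub-disc to be a radial stretch, the local dilatation can be made equal to $K$ there and $1$ elsewhere, so $K(f)=K$. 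The key quantitative point is that the inverse branch $g_j\colon D\to D_j$ then satisfies a Hölder estimate $\diam g_j(X)\le C(\diam X)^{1/K}$ on small sets — this is the standard local Hölder continuity of a $K$-quasiconformal map, \cite{Vaisala} — so that iterating, the composition of $n$ inverse branches maps $D$ to a set of diameter at most $C^{1+1/K+\cdots}R^{K^{-n}}\le A\cdot B^{K^{-n}}$ for constants $A>0$, $B\in(0,1)$. This is precisely the super-exponential decay we need, and it is the part of the construction requiring the most care: one must check that the two tiny sub-discs can be placed so that their $f$-images cover $D$ with the branches disjoint, that the radial-stretch interpolation is genuinely quasiconformal with the claimed dilatation, and that the gluing across $\partial D_j$ produces a globally quasiregular (indeed, here quasiconformal off the branch point) map. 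The requirement $K<2$ enters exactly here: the two sub-discs of radius $\rho$ must be expanded to fill $D$, i.e.\ we need roughly $\rho\gtrsim R^{1/2}$ room-wise while the dilatation budget $K$ must exceed $1$; the arithmetic of fitting two such branches inside $D$ forces $K<2$, matching the range in the statement.

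The final estimate is then a one-line computation: cover $BO(f)=\bigcap_n f^{-n}(D)$ by the $2^n$ generation-$n$ pieces $g_{i_1}\circ\cdots\circ g_{i_n}(D)$, each of diameter $\le A\,B^{K^{-n}}$, so
\[
H_h\bigl(BO(f)\bigr)\le \liminf_{n\to\infty}\, 2^n\, h\!\left(A\,B^{K^{-n}}\right)
= \liminf_{n\to\infty}\, 2^n\left(K^{-n}\log\tfrac1B - \log A\right)^{-\log 2/\log K}.
\]
Since $\bigl(K^{-n}\log(1/B)\bigr)^{-\log2/\log K}=\bigl(\log(1/B)\bigr)^{-\log2/\log K}\cdot K^{n\log2/\log K} = \bigl(\log(1/B)\bigr)^{-\log2/\log K}\cdot 2^{n}$, the product with $2^n$ is \emph{bounded} (the $-\log A$ term only helps), so $H_h(BO(f))<\infty$; and $J(f)\subset BO(f)$ gives $H_h(J(f))\le H_h(BO(f))<\infty$. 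Finally, since for every $s>0$ one has $h(t)=o(t^s)$ as $t\to0$ wait, rather $t^s=o(h(t))$, so $H_h(BO(f))<\infty$ forces the $s$-dimensional Hausdorff measure of $BO(f)$ to vanish for all $s>0$, whence $\dim BO(f)=0$ and a fortiori $\dim J(f)=0$. The main obstacle, as indicated, is purely in the construction step: arranging a quasiregular map of degree $2$ whose two inverse branches are simultaneously (i) globally well-defined and matching up across the patching circles, (ii) of dilatation exactly $K$ with no excess, and (iii) contracting with the sharp Hölder exponent $1/K$ all the way down to the fixed point of the branch dynamics; everything after that is bookkeeping.
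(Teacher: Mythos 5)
There is a genuine gap in the contraction mechanism, and a compounding arithmetic error at the end.

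The estimate you invoke, $\diam g_j(X)\le C(\diam X)^{1/K}$, is the wrong direction. If you iterate it $n$ times starting from a disc of radius $R$, you get
\[
\diam\bigl(g_{i_1}\circ\cdots\circ g_{i_n}(D)\bigr)\le C^{1+1/K+\cdots+1/K^{n-1}}\,(2R)^{K^{-n}},
\]
and since $K>1$ the exponent $K^{-n}\to 0$, so the right-hand side converges to the positive constant $C^{K/(K-1)}$. This gives \emph{no contraction at all}, let alone a super-exponential one. (Your parenthetical ``$\exp(-cK^{-n})$ (equivalently $\lambda^{K^n}$)'' already signals the confusion: $\lambda^{K^n}=\exp(-cK^{n})$ with $c=-\log\lambda$, not $\exp(-cK^{-n})$.) The worst-case H\"older exponent $1/K$ that holds for \emph{every} $K$-quasiconformal map is useless here; what you need is that your particular inverse branches contract like $t\mapsto t^K$ near the branch centres, i.e.\ H\"older exponent $K>1$. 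This is \emph{not} automatic from $K$-quasiregularity — it is a special feature of the inverse of the radial stretch $z\mapsto z|z|^{1/K-1}$, and it has to be built in by hand. That is exactly what the paper does: it glues a rescaled radial stretch onto a nested family of annuli $Y(u)$ whose modulus grows like $\exp(K^n)$, and verifies explicitly that the resulting scales $s_n\sim\lambda^{K^n}$ satisfy $s_{n+1}\approx c\,s_n^{\,K}$. Your construction, as described, does not guarantee anything stronger than the worthless $1/K$ bound.

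The error then propagates into the final computation. With your (incorrect) diameter estimate $A\,B^{K^{-n}}$, you have
\[
2^n\,h\!\left(A\,B^{K^{-n}}\right)
=2^n\bigl(K^{-n}\log\tfrac1B-\log A\bigr)^{-\log2/\log K},
\]
and since $K^{-n}\to 0$ the bracket tends to the constant $-\log A$, so the whole expression grows like $2^n$ and \emph{diverges}. (Your own intermediate identity $(K^{-n})^{-\log2/\log K}=2^n$ even makes this visible: multiplying by the extra $2^n$ gives $4^n$, not a bounded quantity.) With the correct decay $A\,B^{K^{n}}$ one instead gets the bracket $\sim K^n\log(1/B)$, hence $(K^{n})^{-\log2/\log K}=2^{-n}$, and the product $2^n\cdot 2^{-n}$ is bounded — this is what the paper's computation achieves with $s_n$ in place of $B^{K^n}$. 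Finally, your heuristic for where $K<2$ enters (``fitting two branches inside $D$'') is not quite right: the construction itself works for any $K>1$; the restriction $K<\deg f=2$ is simply the standing hypothesis $\deg(f)>K(f)$ under which the Sun--Yang theory (and hence the notion of $J(f)$ used throughout the paper) applies.
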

With some more effort one could obtain analogous examples of any given degree. For degrees of the form $2^k$ with $k\in \N$ we only have to replace $f$ by $f^k$.

\section{Preliminaries for the proof of Theorem~\ref{thm1}}
\label{prelim}
We denote the open disk of radius $r$ around a point $a\in\C$
by $D(a,r)$ and the closed disk by $\overline{D}(a,r)$.
The same notation will be used for balls in $\R^n$.
The disk  around $a\in\CC$
with respect to the chordal metric $\chi$ is denoted  by $D_\chi(a,r)$
and the diameter of a subset $A$ of $\CC$ with respect to 
$\chi$ is denoted by $\diam_\chi A$.

An important tool to obtain 
lower bounds for the Hausdorff measure and the Hausdorff dimension
is the mass distribution principle.
We will use the following version;
see~\cite[Theorem~7.6.1]{Przytycki10}.
\begin{lemma} \label{lemma1}
Let $A\subset \R^n$ be compact
and let $h$ be a gauge function.
Suppose that there exist a
probability measure $\mu$ supported on $A$
and a positive constant $C$
such that
$\mu(D(x,r))\leq C\,h(r)$ for $0<r\leq\varepsilon$ and all $x\in A$.
Then $H_h(A) >0$.
\end{lemma}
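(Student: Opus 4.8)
The plan is the classical ``mass distribution'' argument: the probability measure $\mu$ forces every efficient cover of $A$ to have a large $h$-sum, so the infimum defining $H_h(A)$ cannot be made small. Concretely, fix $\delta\in(0,\eps)$ and let $(A_i)_i$ be an arbitrary countable cover of $A$ with $\diam A_i<\delta$. First I would tidy up: discard every $A_i$ disjoint from $A$ (this only decreases $\sum_i h(\diam A_i)$, since $h>0$, and what remains still covers $A$) and replace each remaining $A_i$ by its closure (which changes neither $\diam A_i$ nor the covering property, and makes $A_i$ Borel). For each remaining index $i$ I then choose a point $x_i\in A_i\cap A$, so that $A_i\subseteq\overline{D}(x_i,\diam A_i)$.

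The key step is the estimate $\mu(\overline{D}(x_i,\diam A_i))\le C\,h(\diam A_i)$. The hypothesis only provides this for \emph{open} balls, but since $\overline{D}(x,r)=\bigcap_{s>r}D(x,s)$ and $\mu$ is finite, $\mu(\overline{D}(x,r))=\lim_{s\downarrow r}\mu(D(x,s))\le\lim_{s\downarrow r}C\,h(s)=C\,h(r)$, where the last equality is the continuity of the gauge function $h$ (and $r=\diam A_i<\delta<\eps$ keeps us inside its domain); here one uses $x_i\in A$ so that the hypothesis is applicable. Summing over $i$ and using that $\mu$ is a probability measure carried by $A$,
\[
1=\mu(A)\le\sum_i\mu(A_i)\le\sum_i\mu(\overline{D}(x_i,\diam A_i))\le C\sum_i h(\diam A_i),
\]
so $\sum_i h(\diam A_i)\ge 1/C$ for every cover of $A$ by sets of diameter $<\delta$. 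Taking the infimum over all such covers and then letting $\delta\to0$ yields $H_h(A)\ge 1/C>0$. There is essentially no obstacle: the only point requiring care is the passage from open to closed balls, which is exactly where the requirements that $h$ be continuous and non-decreasing enter, together with the harmless reductions to covering sets that meet $A$ and are Borel.
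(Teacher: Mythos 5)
Your proof is correct and is the standard mass-distribution argument; the paper does not prove Lemma~\ref{lemma1} itself but cites \cite[Theorem~7.6.1]{Przytycki10}, and your chain of reductions (discard sets missing $A$, pass to closures, enclose in closed balls centered at $x_i\in A$, pass from open to closed balls via continuity of $h$, then sum) is exactly the argument that reference gives. The only point left implicit is the case $\diam A_i=0$, where $h(\diam A_i)$ is read as $\lim_{t\to 0}h(t)=0$, which is harmless since the hypothesis forces $\mu(\{x_i\})\le C\,h(r)\to 0$ so that $\mu$ is atomless on~$A$.
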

For the definition and basic properties of quasiregular maps 
we refer to Rickman's book~\cite{Rickman93}.
A standard book for the the $2$-dimensional case is the book
by Lehto and Virtanen~\cite{Lehto73}. Note that their book, except
for the last chapter, deals with quasiconformal maps, i.e., injective
quasiregular maps. However, since every quasiregular map can be 
written as the composition of an analytic map with a quasiconformal
one (cf.~\cite[Chapter VI]{Lehto73}),
many properties of quasiconformal maps extend to quasiregular ones.

Let $\Omega$ be a domain in $\R^n$ and let $f\colon\Omega\to\R^n$ be
a (non-constant) quasi\-regular map.
The \emph{local index} $i(x,f)$ at a point $x\in\Omega$ is defined by
\[
i(x,f) =\inf_U \sup_{y\in\R^n}\card\!\left( f^{-1}(y)\cap U\right),
\]
where the infimum is taken over all neighborhoods $U\subset \Omega$ of~$x$.
Thus $i(x,f)=1$ if and only if $f$ is injective in a neighborhood of~$x$.
The \emph{branch set} 
consists of all $x\in \Omega$ for which $i(x,f)\geq 2$.

As already mentioned, the $2$-dimensional case (i.e.\ the case $n=2$)
is somewhat easier
to deal with since then the branch set is a discrete subset of $\Omega$.
Its elements are called critical points.
For a critical point $c$ we call $i(x,f)-1$ the \emph{multiplicity} of~$c$.
An important tool is the following result
known as the Riemann-Hurwitz Formula; see \cite[\S 5.4]{Beardon91},
\cite[p.~68]{Milnor06} or \cite[\S 1.3]{Steinmetz93}.
Here $\chi(\Omega)$ denotes the Euler characteristic of a domain~$\Omega$.
\begin{lemma}\label{lemma2} 
Let $\Omega_1$ and $\Omega_2$ be domains in $\CC$
and let $f\colon \Omega_1\to \Omega_2$ be a proper quasi\-regulars
 map of degree~$d$.
Denote by $s$ the number of critical points of~$f$, counting multiplicity; that is,
\[s=\sum_{x\in B_f}(i(x,f)-1).\]
Then
\begin{equation}
\label{6b} \chi(\Omega_1)+s=d\chi(\Omega_2).
\end{equation}
\end{lemma}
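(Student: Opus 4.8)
The plan is to reduce the statement to the classical Riemann--Hurwitz formula for proper holomorphic maps, which is available because in dimension two every quasiregular map is, after a quasiconformal change of coordinates, holomorphic. Precisely, by the Sto\"ilow factorization theorem (cf.~\cite[Chapter~VI]{Lehto73}) one writes $f=g\circ\varphi$, where $\varphi\colon\Omega_1\to\Omega_1^*$ is a quasiconformal homeomorphism onto a domain $\Omega_1^*\subset\CC$ and $g=f\circ\varphi^{-1}\colon\Omega_1^*\to\Omega_2$ is holomorphic (as a map into $\CC$). Since $f$ is proper onto $\Omega_2$ and $\varphi$ is a homeomorphism, $g$ is again proper onto $\Omega_2$: for compact $L\subset\Omega_2$ the set $g^{-1}(L)=\varphi\!\left(f^{-1}(L)\right)$ is compact in $\Omega_1^*$.

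The second step is to check that every quantity in~\eqref{6b} is unchanged when $f$ is replaced by $g$. The Euler characteristic is a topological invariant, hence $\chi(\Omega_1)=\chi(\Omega_1^*)$. The local index, as defined in section~\ref{prelim}, depends only on cardinalities of preimages in arbitrarily small neighbourhoods of the point; since $\varphi$ carries a neighbourhood basis of $x$ to one of $\varphi(x)$ and $f^{-1}(y)=\varphi^{-1}\!\left(g^{-1}(y)\right)$, we get $i(x,f)=i(\varphi(x),g)$ for every $x$. Thus $\varphi$ maps $B_f$ bijectively onto $B_g$ and the weighted count $s=\sum_{x\in B_f}(i(x,f)-1)$ is the same for $g$. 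The degree is preserved for the same reason: for a proper open discrete map the number $\sum_{x\in f^{-1}(y)}i(x,f)$ is independent of $y$, and, being defined through preimages, it is unaffected by pre-composition with a homeomorphism; so $g$ has degree $d$.

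It then remains to invoke the Riemann--Hurwitz formula in the holomorphic case for $g\colon\Omega_1^*\to\Omega_2$, as found in \cite[\S5.4]{Beardon91}, \cite[p.~68]{Milnor06} or \cite[\S1.3]{Steinmetz93}. There $s$ is the number of zeros of $g'$ counted with multiplicity, which in local holomorphic coordinates equals $\sum_{x^*}(i(x^*,g)-1)$, so it is the same $s$ as above; the holomorphic formula gives $\chi(\Omega_1^*)+s=d\,\chi(\Omega_2)$, and combining this with $\chi(\Omega_1)=\chi(\Omega_1^*)$ yields~\eqref{6b}.

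I expect the only genuine subtlety to be the topological invariance used in the second step, in particular the assertion that the branch data $i(x,f)$ survive the quasiconformal coordinate change untouched; this rests on the fact that a non-constant plane quasiregular map is open, discrete and sense-preserving, so that $i(x,f)$ coincides with the topological local degree of $f$ at $x$, which is manifestly invariant under pre- and post-composition with homeomorphisms. An alternative, avoiding the Sto\"ilow reduction entirely, is to prove~\eqref{6b} purely topologically: triangulate $\Omega_2$ so finely that all critical values together with their finitely many preimages are vertices, lift this triangulation through the branched covering $f$ to a triangulation of $\Omega_1$, and compare the alternating sums of the numbers of vertices, edges and faces. Throughout, the statement is understood with the tacit hypothesis that $\Omega_1$ and $\Omega_2$ be finitely connected, so that all terms in~\eqref{6b} are finite.
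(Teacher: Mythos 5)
The paper offers no proof of Lemma~\ref{lemma2}: it is quoted as known from the cited references, with the passage from the holomorphic to the quasiregular case implicitly covered by the Sto\"ilow factorization already invoked in section~\ref{prelim} (``every quasiregular map can be written as the composition of an analytic map with a quasiconformal one''). Your argument spells out exactly this reduction --- factor $f=g\circ\varphi$, check that properness, degree, local indices and Euler characteristics are preserved, then apply the holomorphic Riemann--Hurwitz formula --- and it is correct, including the appropriate caveat about finite connectivity.
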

Since $\chi(\CC)=2$, the equation \eqref{6b} takes the form $s=2d-2$
if $\Omega_1=\Omega_2=\CC$.
Thus, counting multiplicities, the number of critical points of 
a quasiregular map $f\colon \CC\to\CC$ is equal to $2\deg(f)-2$,
as in the case of rational functions.

If $\Omega_j$ is a domain of connectivity $c_j$, then $\chi(\Omega_j)=2-c_j$
and \eqref{6b} takes the form 
\begin{equation}\label{6a} c_1-2=d(c_2-2)+r.
\end{equation}
We shall only need the case that $c_1=c_2=1$.
Then~\eqref{6a} simplifies to 
\begin{equation}\label{6c} s=d-1.
\end{equation}

A consequence is the following result.
\begin{lemma}\label{lemma3a} 
Let $f\colon\CC\to\CC$ be a non-constant quasiregular map
and $V\subset \CC$ a simply connected domain.
Denote by $n$ the number of components of $f^{-1}(U)$
and by $s$ the number of critical points in  $f^{-1}(U)$,
counting multiplicities.
If all components of $f^{-1}(U)$ are simply connected, then $n= \deg(f)-s$.
\end{lemma}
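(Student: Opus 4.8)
The plan is to decompose $f^{-1}(V)$ into its components and apply the Riemann--Hurwitz formula (Lemma~\ref{lemma2}) to the restriction of $f$ to each of them. First I would record the standard facts that a non-constant quasiregular self-map $f\colon\CC\to\CC$ is open and discrete and proper; in particular it has a well-defined degree $d=\deg(f)$, meaning $\sum_{x\in f^{-1}(y)}i(x,f)=d$ for every $y\in\CC$. Let $V_j$ be a component of the open set $f^{-1}(V)$. The key preliminary step is to show that $f|_{V_j}\colon V_j\to V$ is a proper surjection. Surjectivity holds because $f(V_j)$ is open in $V$ (as $f$ is open) and closed in $V$ (as $f|_{V_j}$ is proper, being the restriction to a component of the preimage under a continuous map on the compact space $\CC$), hence equals the connected set $V$. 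Properness of $f|_{V_j}$ follows from properness of $f$: a sequence in $V_j$ whose images converge in $V$ has a subsequence converging in $\CC$ to a point $x$ with $f(x)\in V$, so $x\in f^{-1}(V)$; since $V_j$ is relatively open and closed in $f^{-1}(V)$ and $x\in\overline{V_j}$, we get $x\in V_j$.

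Thus each $f|_{V_j}\colon V_j\to V$ is a proper quasiregular map of some degree $d_j\ge 1$, and counting the preimages (with local index) of a single point of $V$ over all components gives $\sum_j d_j=d$; in particular there are only finitely many components, say $V_1,\dots,V_n$, and $n\le d$. Now apply Lemma~\ref{lemma2} to each $f|_{V_j}\colon V_j\to V$. Both $V_j$ (by hypothesis) and $V$ are simply connected, so the relevant case is $c_1=c_2=1$, and the formula reduces to \eqref{6c}: if $s_j$ denotes the number of critical points of $f$ in $V_j$ counted with multiplicity, then $s_j=d_j-1$. Summing over $j=1,\dots,n$ yields $s=\sum_{j=1}^n s_j=\sum_{j=1}^n(d_j-1)=d-n$, that is, $n=\deg(f)-s$, as claimed.

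The only genuinely nontrivial point is the verification that $f$ maps each component $V_j$ properly \emph{onto} all of $V$, which rests on the openness, discreteness, and properness of quasiregular self-maps of $\CC$; once this is in hand, the conclusion is an immediate consequence of the Riemann--Hurwitz formula applied componentwise (and the finiteness of $n$ drops out along the way, though for a self-map of $\CC$ it is automatic anyway).
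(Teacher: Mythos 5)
Your proof is correct and follows essentially the same route as the paper: decompose $f^{-1}(V)$ into components, apply the Riemann--Hurwitz formula in the simply connected case \eqref{6c} to get $s_j=d_j-1$ on each component, and sum. The paper simply takes for granted the properness and surjectivity of each restriction $f|_{V_j}\colon V_j\to V$, which you spell out explicitly; that is sensible bookkeeping rather than a different argument.
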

\begin{proof}
Denote by $V_1,\dots,V_n$ the components of $f^{-1}(U)$, by
$s_j$ the number of critical points in $V_j$ and by 
$d_j$ the degree of the proper map $f\colon V_j\to U$.
Then 
\[
\sum_{j=1}^n s_j=s
\quad\text{and}\quad
\sum_{j=1}^n d_j=\deg(f).
\]
By~\eqref{6c} we have $s_j=d_j-1$.
Hence 
\[
n=\sum_{j=1}^n (d_j-s_j)= \sum_{j=1}^n d_j-\sum_{j=1}^n s_j =\deg(f)-s.
\qedhere
\]
\end{proof}

\begin{lemma}\label{lemma3} 
Let $f\colon\CC\to\CC$ be quasiregular. Then there exists $\eta>0$ such 
that if $U\subset\CC$ is a simply connected domain satisfying
$\diam_\chi U< \eta$, then all components of $f^{-1}(U)$ are simply connected.
\end{lemma}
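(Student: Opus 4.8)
The plan is to exploit that $f$, being quasiregular on the compact sphere $\CC$, has only finitely many critical points (indeed $2\deg(f)-2$, counting multiplicity, by the Riemann--Hurwitz formula), and hence only finitely many critical values $v_1,\dots,v_m$. I would set
\[
\eta=\min\{\,\chi(v_i,v_j)\colon 1\le i<j\le m\,\}
\]
if $m\ge 2$, and take $\eta$ to be any positive number otherwise; shrinking $\eta$ if necessary we may also assume $\eta\le\diam_\chi\CC$. Then every simply connected domain $U$ with $\diam_\chi U<\eta$ is a proper subdomain of $\CC$ and contains at most one critical value of~$f$.

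Fix such a $U$ and let $V$ be a component of $f^{-1}(U)$. The first step is the routine verification that $f|_V\colon V\to U$ is a proper surjection of some finite degree $d_V$ with $1\le d_V\le\deg(f)$: since $f$ is proper on $\CC$, the set $f^{-1}(K)$ is compact for every compact $K\subset U$, and because $\partial V\subset f^{-1}(\partial U)$ is disjoint from $f^{-1}(K)$, the closure of $f^{-1}(K)\cap V$ cannot meet $\partial V$, so $f^{-1}(K)\cap V$ is closed in $\CC$, hence compact; moreover $f(V)$ is open and, by properness, closed in the connected set $U$, so $f(V)=U$. In particular $V$ is a domain in $\CC$ with $V\neq\CC$.

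The heart of the argument is that one critical value in $U$ is harmless. Let $s_V$ denote the number of critical points of $f$ lying in $V$, counted with multiplicity. If $U$ contains no critical value, then $s_V=0$. Otherwise every critical point of $f$ in $V$ is mapped to the unique critical value $w\in U$, and since $\sum_{x\in f^{-1}(w)\cap V}i(x,f)=d_V$ while $f^{-1}(w)\cap V\neq\emptyset$, we obtain
\[
s_V=\sum_{x\in f^{-1}(w)\cap V}\bigl(i(x,f)-1\bigr)=d_V-\card\!\bigl(f^{-1}(w)\cap V\bigr)\le d_V-1 .
\]
Applying the Riemann--Hurwitz formula (Lemma~\ref{lemma2}) to the proper map $f|_V\colon V\to U$ and using $\chi(U)=1$ gives $\chi(V)=d_V\chi(U)-s_V=d_V-s_V\ge 1$. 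As every domain in $\CC$ has Euler characteristic at most~$1$, this forces $\chi(V)=1$, i.e.\ $V$ has connectivity~$1$ and is simply connected. Since $V$ was an arbitrary component of $f^{-1}(U)$, the lemma follows with the $\eta$ chosen above.

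I expect the main conceptual point to be the displayed inequality $s_V\le d_V-1$, which is precisely what makes a single critical value in $U$ tolerable; two critical values would not do, since already $z\mapsto z+1/z$ (degree~$2$, critical values $\pm 2$) has an annular, hence not simply connected, preimage component over a large disk containing both critical values, so the smallness of $\eta$ is genuinely needed. A minor technical point is that Lemma~\ref{lemma2} presupposes a finite Euler characteristic for $V$; this is automatic, as $s_V$ is finite and the formula would otherwise read $-\infty=d_V-s_V$, so $V$ necessarily has finite connectivity.
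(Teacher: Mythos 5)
Your proof is correct, but it takes a genuinely different route from the one in the paper. The paper's argument is metric and topological: by continuity and compactness of $\CC$, one first chooses $\eta$ so that every component of $f^{-1}$ of a set of chordal diameter less than $\eta$ has diameter less than some fixed $\delta<1$; if such a component $V$ were multiply connected, a Jordan curve $\gamma\subset V$ would have a complementary component $W$ of diameter less than $\delta$ meeting $\partial V$, and then $f(W)$ would have to contain $\CC\backslash U$, which is far too large given $\diam_\chi f(W)<1$. In contrast, you choose $\eta$ smaller than the minimal chordal distance between critical values, so that $U$ can contain at most one critical value, and then run the Riemann--Hurwitz formula on the proper restriction $f|_V\colon V\to U$: the crucial inequality $s_V\le d_V-1$ forces $\chi(V)\ge 1$, hence $\chi(V)=1$. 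Both arguments are valid. Yours is more algebraic and explicitly quantitative (the value of $\eta$ is identified in terms of the critical values), and it dovetails naturally with the Riemann--Hurwitz computation already used in Lemma~\ref{lemma3a}; your example $z\mapsto z+1/z$ neatly illustrates that allowing two critical values would break the statement. The paper's argument is more elementary (no Riemann--Hurwitz) and, more importantly, generalizes to higher dimensions where Riemann--Hurwitz is unavailable, which is presumably why that route was chosen given the paper's emphasis on connections to the $n$-dimensional theory. Your closing remark that $V$ must have finite connectivity because $s_V$ is finite is essentially right, though it is better phrased as: with finitely many branch points one pulls back a finite triangulation of $U$ to a finite triangulation of $V$, so $\chi(V)$ is finite and the formula applies; the ``$-\infty=d_V-s_V$'' reading is slightly informal. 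You should also explicitly dispose of the constant case (where $f^{-1}(U)$ is either $\emptyset$ or $\CC$), as the paper does, before speaking of critical points and degree.
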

\begin{proof}
As $f$ is continuous, there exists $\delta >0$ such that
$\chi(f(z),f(w))<1$ for $z,w\in\CC$ with $\chi(z,w)<1$.
We may assume that $f$ is non-constant.
This implies that there exists $\eta>0$ such that if
$z\in\CC$, then all components of $f^{-1}(D_\chi(z,\eta))$ 
have chordal diameter less than $\delta$. We may assume that $\eta<1$.

Let now $U\subset \CC$ be a simply connected domain satisfying
$\diam_\chi U< \eta$ and let $V$ be a component of $f^{-1}(U)$.
Then $\diam_\chi V< \delta$.
Since $f\colon V\to U$ is proper, we have $f(\partial V)=\partial U$.
Suppose that $V$ is  multiply connected.
Then $V$ contains a Jordan curve $\gamma$ such that both 
complementary components of $\gamma$ intersect $\partial V$.
Since $\diam_\chi \gamma\leq \diam_\chi V<\delta$, one of these
two complementary components of $\gamma$ has chordal diameter less than~$\delta$.
Denote this component by~$W$; that is, $W$ is a component of 
$\CC\backslash\gamma$ with $\diam_\chi W<\delta$.
Then $\diam_\chi f(W)<1$ by the choice of $\delta$. Moreover, 
$W\cap \partial V\neq\emptyset$ and $\partial W=\gamma\subset V$.
This implies that $f(W)\cap\partial U=f(W\cap\partial V)\neq\emptyset$ and
$\partial f(W)\subset f(\partial W)\subset f(V)=U$.
We deduce that $f(W)\supset\CC\backslash U$. Since $\diam_\chi U<\eta<1$,
but also $\diam_\chi f(W)<1$, this is a contradiction.
\end{proof}

The following estimate is far from sharp, but suffices for our purposes.
\begin{lemma}\label{lemma4} 
Let $f\colon\CC\to\CC$ be a quasiregular map of degree at least~$2$ and
let  $z\in \CC\backslash E(f)$. Then $\card f^{-6}(z)\geq 3$.
\end{lemma}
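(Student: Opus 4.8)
The plan is to show that $\card f^{-6}(z) \geq 3$ by exploiting the fact that $z \notin E(f)$, which means the backward orbit $O^-_f(z)$ is infinite, together with the structure imposed by the Riemann--Hurwitz formula (Lemma~\ref{lemma2}) on how preimage cardinalities can fail to grow. The key point is that since $\deg(f) \geq 2$, the total preimage count $\card f^{-1}(w)$ (counted without multiplicity) can only be small if most of the mass is concentrated at critical points, and there are only $2\deg(f)-2$ critical points counted with multiplicity. First I would establish the elementary fact that $\card f^{-1}(w) = \deg(f) - (\text{number of critical points in } f^{-1}(w)\text{, with multiplicity})$ for every $w \in \CC$; this is the global instance of \eqref{6c}, or alternatively follows from applying Lemma~\ref{lemma3a} with $V=\CC$ after noting $\CC$ is simply connected. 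In particular $\card f^{-1}(w) = 1$ forces $f^{-1}(w)$ to consist of a single point which is a critical point of multiplicity $\deg(f)-1$, and $\card f^{-1}(w) \leq 2$ forces $f^{-1}(w)$ to contain at least $\deg(f)-2$ critical points counted with multiplicity.

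Next I would argue by contradiction: suppose $\card f^{-6}(z) \leq 2$. Consider the chain $z,\ f^{-1}(z),\ f^{-2}(z),\dots,f^{-6}(z)$. If at some level $k \leq 5$ we already had $\card f^{-k}(z) \geq 3$, then since each point of $f^{-k}(z)$ has at least one preimage, we would get $\card f^{-(k+1)}(z) \geq 3$ as well, and iterating, $\card f^{-6}(z) \geq 3$, a contradiction. Hence $\card f^{-k}(z) \leq 2$ for every $k = 0,1,\dots,6$. Now I distinguish the two cases for the total degree. If $\deg(f) \geq 4$, then each of the (at most two) points in $f^{-k}(z)$ has a single-point preimage set only if that point is a critical point of multiplicity $\deg(f)-1 \geq 3$; more generally, each level $f^{-k}(z) \to f^{-(k+1)}(z)$ with both sets of size $\leq 2$ forces the $\deg(f)-2 \geq 2$ missing preimages (with multiplicity) to be absorbed into critical points lying in $f^{-(k+1)}(z)$. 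Summing over the six disjoint levels $k=1,\dots,6$ produces at least $6(\deg(f)-2) \geq 6 \cdot 2 = 12 > 2\deg(f)-2$ critical points (with multiplicity) whenever $\deg(f) \leq 6$, and for $\deg(f) \geq 7$ one sees $\card f^{-1}(z) = \deg(f) - s_1 \geq 2$ already, so one has to be slightly more careful — but in fact for any $\deg(f)\geq 3$ the bound $\card f^{-1}(w)\leq 2$ at a given $w$ costs $\deg(f)-2$ critical points there, and six such constraints at distinct points cost $6(\deg(f)-2)$, which exceeds $2\deg(f)-2$ precisely when $\deg(f) < \tfrac{10}{4}$, i.e.\ never for $\deg(f)\geq 3$; so the genuinely delicate case is $\deg(f)=2$.

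The heart of the argument is therefore $\deg(f) = 2$. Here $\card f^{-1}(w) = 2 - s(w)$ where $s(w)\in\{0,1\}$ is the number of critical points over $w$ (each critical value has exactly one critical preimage, of multiplicity $1$), and there are exactly $2\deg(f)-2 = 2$ critical points total, hence at most two critical values. A point $w$ has $\card f^{-1}(w) = 1$ exactly when $w$ is a critical value. Suppose $\card f^{-6}(z) \leq 2$; by the monotonicity argument above, $\card f^{-k}(z) \leq 2$, in fact $\card f^{-k}(z) = 1$ for all $k=0,\dots,6$ except possibly in a controlled way — more precisely, if ever $\card f^{-k}(z) = 2$ with $z\notin E(f)$, then to keep $\card f^{-(k+1)}(z)\leq 2$ \emph{both} points of $f^{-k}(z)$ must be critical values, but there are only two critical values total, so this can happen at only one level, and at all other levels $\card f^{-j}(z)=1$, meaning $f^{-j}(z)$ is a single critical value at each of at least five of the six levels $j=1,\dots,6$. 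Since these level sets are pairwise disjoint (as $z\notin E(f)$, no point recurs — or if a point recurred, the orbit would be finite, contradicting $z\notin E(f)$), we would need at least five distinct critical values, contradicting the bound of two. This contradiction shows $\card f^{-6}(z)\geq 3$.

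The main obstacle I anticipate is the bookkeeping in the $\deg(f)=2$ case: one must carefully track that the level sets $f^{-1}(z),\dots,f^{-6}(z)$ are genuinely disjoint (which uses $z\notin E(f)$ to rule out a periodic point in the backward orbit, since a periodic point would make $O^-_f(z)$ finite) and that "$\card f^{-k}(z)=1$ for a long stretch of $k$" really does force a correspondingly long list of distinct critical values rather than the same critical value reused — again ruled out by disjointness. Once disjointness is nailed down, the pigeonhole count against the fixed budget of $2\deg(f)-2$ critical points (with multiplicity) closes the argument cleanly, and the generous choice of the exponent $6$ gives ample room so that no sharp estimate is needed anywhere.
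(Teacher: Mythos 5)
There is a genuine gap, and it is precisely in the place you flagged as needing care: the pairwise disjointness of the level sets $f^{-1}(z),\dots,f^{-6}(z)$.

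You justify disjointness by saying that if a point recurred the orbit would be finite, contradicting $z\notin E(f)$. This is false. If $a\in f^{-j}(z)\cap f^{-k}(z)$ with $j<k$, then $f^{k-j}(z)=z$, so $z$ is \emph{periodic} --- but a periodic point can have an infinite backward orbit, so this does not put $z$ in $E(f)$. (Think of a repelling fixed point $z$ of a quadratic rational map: $z\in f^{-k}(z)$ for every $k$, yet $\card O^-_f(z)=\infty$ and $z\notin E(f)$.) Since both your $\deg(f)\ge 3$ count and your $\deg(f)=2$ count rely on summing critical data across supposedly disjoint levels, the argument as written does not close. The paper avoids disjointness entirely: it shows that at least five of the six sets $f^{-k}(z)$ consist only of critical points of \emph{maximal} multiplicity, extracts from this three consecutive such points $b,f(b),f^2(b)$, and since at most two such points exist, concludes by pigeonhole that they lie on a periodic cycle of maximal-multiplicity critical points. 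The crucial extra observation is that each point of such a cycle has exactly one $f$-preimage, namely the preceding cycle point, so $O^-_f(z)$ \emph{equals} the finite cycle and $z\in E(f)$ --- a contradiction that is immune to the non-disjointness you were trying to rule out.

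There are two further slips. First, the claim in the $\deg(f)=2$ case that ``at all other levels $\card f^{-j}(z)=1$'' so that $f^{-j}(z)$ is a single critical value at five of the six levels does not follow: the non-decreasing cardinality sequence can equal $2$ from some level $k_0$ onward, so cardinality $1$ may occur at far fewer than five levels. What is true (and what the paper uses) is that at every level where the cardinality does \emph{not} jump, the set $f^{-k}(z)$ consists only of maximal-multiplicity critical \emph{points}; since there is at most one jump, at least five of the six levels are of this kind. Second, in the $\deg(f)\ge 3$ discussion the inequality $6(\deg(f)-2)>2\deg(f)-2$ holds precisely when $\deg(f)>\tfrac52$, not when $\deg(f)<\tfrac{10}{4}$; you wrote it backwards, although your stated conclusion (that the delicate case is $\deg(f)=2$) happens to be right. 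The overall strategy of counting against the budget $2\deg(f)-2$ matches the paper, but to make it rigorous you need the paper's mechanism for handling recurrence rather than an unjustified disjointness assumption.
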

\begin{proof}
Suppose that $a\in\CC$ satisfies $\card f^{-1}(a)=1$, say $f^{-1}(a)=\{b\}$.
Then $i(x,b)=\deg(f)$. Thus $b$ is a critical point of multiplicity $\deg(f)-1$,
which is the maximal multiplicity a critical point can have.
As the number of critical points, counting multiplicities, is 
equal to $2\deg(f)-2$, 
there are at most two critical points of this maximal multiplicity, and
thus at most two such values of~$a$.

Suppose now that $\card f^{-6}(z)\leq 2$. Then at least five of
the six sets $f^{-k}(z)$, where $k\in\{1,\dots,6\}$, consist only of
critical points of maximal multiplicity.
Thus some $f^{-k}(z)$ contains a critical point $b$ of maximal multiplicity
such that $f(b)$ and $f^2(b)$ are also critical points of maximal
multiplicity. As there are at most two such critical points, we see that
the union of the sets $f^{-k}(z)$ contains a periodic orbit consisting
only of critical points of maximal multiplicity. Hence $z$ is also
in this orbit, and $O^-(z)$ is equal to this orbit, contradicting the
assumption that $z\in E(f)$.
\end{proof}

\section{Proof of Theorem~\ref{thm1}} \label{proof1}
We put $K=K(f)$ and $d=\deg(f)$.
We may assume that $K>1$ since otherwise $f$ is a rational 
function so that the conclusion follows from the result of
Garber already mentioned in the introduction.
We note that a  $K$-quasiregular map is H\"older continuous with exponent
$\alpha=1/K$; see, e.g., \cite[\S II.3.4]{Lehto73}.
Thus there exists $M>0$ such that $\chi(f(z),f(w))\leq M \chi(z,w)^\alpha$
for all $z,w\in\CC$. Induction shows that
\[
\chi(f^k(z),f^k(w))\leq M^{1+\alpha+\dots+\alpha^{k-1}} \chi(z,w)^{\alpha^k}
\]
for $k\in\N$ and $z,w\in\CC$. With $L=M^{1/(1-\alpha)}$ we thus have
\begin{equation}\label{5a}
\chi(f^k(z),f^k(w))\leq L \chi(z,w)^{\alpha^k}
\end{equation}
for $k\in\N$ and $z,w\in\CC$.

Choose $\eta$ according
to Lemma~\ref{lemma3} and let $0<\varepsilon<\min\{L,\eta/2\}$.
For $w\in\CC\backslash E(f)$ we will inductively
define a sequence $(N_m(w))_{m\geq 0}$ of positive integers and,
for $j\in\{1,\dots,N_m(w)\}$, we will also define domains
$U_{m,j}(w)$ and $V_{m,j}(w)$ and points $a_{m,j}(w)$ satisfying
$a_{m,j}(w)\in U_{m,j}(w)\subset V_{m,j}(w)$.
First we put $N_0(w)=1$, $U_{0,1}(w)=V_{0,1}(w)=D_\chi(w,\varepsilon)$ and
$a_{0,1}(w)=w$.
Assuming that $N_{m-1}(w)$, the domains $U_{m-1,j}(w)$ and $V_{m-1,j}(w)$ 
and the points $a_{m-1,j}(w)$ have been defined, we 
define $N_m(w)$ as the number of components of 
$$f^{-1}\left(\bigcup_{i=1}^{N_{m-1}(w)} U_{m-1,i}(w)\right)$$
and we denote these components by $V_{m,1}(w),\dots,V_{m,N_m(w)}(w)$.
Then we choose 
\[
a_{m,j}(w)\in
V_{m,j}(w)\cap f^{-1}\!\left(
\left\{ a_{m-1,i}(w)\colon 1\leq i\leq N_{m-1}(w)\right\}\right)
\]
and we define $U_{m,j}(w)$ as the component of
$V_{m,j}(w)\cap D_\chi(a_{m,j}(w),\varepsilon)$ that contains $a_{m,j}(w)$.
It follows from Lemma~\ref{lemma3} and the choice of $\eta$ and
$\varepsilon$ that the $V_{m,j}(w)$ and hence the $U_{m,j}(w)$ are
simply connected.

If $z\in\partial U_{m,j}(w)$, then $\chi(f^l(z),f^l(a_{m,j}(w)))=\varepsilon$
for some $l$ satisfying $0\leq l\leq m$.
Hence 
\[
\chi(z,a_{m,j})\geq \left(\frac{\chi(f^l(z),f^l(a_{m,j}(w)))}{L}\right)^{1/\alpha^l}
= \left(\frac{\varepsilon}{L}\right)^{K^l} \geq \left(\frac{\varepsilon}{L}\right)^{K^m}
\]
for $z\in\partial U_{m,j}(w)$ by~\eqref{5a}.
With $r_m=(\varepsilon/L)^{K^m}$ we thus find that
\begin{equation}\label{5c}
D_\chi(a_{m,j}(w),r_m)\subset U_{m,j}(w)
\end{equation}
for $w\in \CC\backslash E(f)$, $m\geq 0$ and $1\leq j\leq N_m(w)$.

We now fix a point $\xi\in\CC\backslash E(f)$ and put $N_m=N_m(\xi)$,
$U_{m,j}=U_{m,j}(\xi)$ and $a_{m,j}=a_{m,j}(\xi)$.
Let $s_{m,j}$ be the number of critical points in $f^{-1}(U_{m,j})$ and let
$n_{m,j}$ be the number of components of $f^{-1}(U_{m,j})$.
Then $n_{m,j}=d-s_{m,j}$ by Lemma~\ref{lemma3a}.
Thus 
\[
N_{m+1}=\sum_{j=1}^{N_m} n_{m,j}
=\sum_{j=1}^{N_m} (d-s_{m,j}) =dN_m -\sum_{j=1}^{N_m} s_{m,j}
\geq dN_m-(2d-2).
\]
Writing this inequality in the form $N_{m+1}-2\geq d(N_m-2)$
we see by induction that
\[
N_{m+l}-2\geq d^l(N_m-2)
\]
for $l\in\N$.

By Lemma~\ref{lemma4} we have $\card f^{-6}(\xi)\geq 3$.
Choosing $\varepsilon$ sufficiently small we may thus achieve that $N_6\geq 3$.
Hence 
\begin{equation}\label{5d}
N_m\geq d^{m-6}(N_6-2)+2\geq d^{m-6}
\end{equation}
for $m\geq 6$. In the opposite direction, we clearly have $N_m\leq d^m$.

For $m\in\N$ we put 
$A_m=\{a_{m,j}\colon 1\leq j\leq N_m\}$ and define a probability 
measure $\mu_m$ on $\CC$ by
\[
\mu_m=\frac{1}{N_m}\sum_{z\in A_m} \delta_z,
\]
where $\delta_z$ denotes the Dirac measure. By \cite[Theorem~6.5]{Walters}, 
the sequence $(\mu_m)$ has a subsequence which
converges with respect to the weak$^*$-topology, say $\mu_{m_j}\to \mu$.
By construction, the supports of the measures $\mu_m$, and hence 
the support of $\mu$, are contained in $\overline{O_f^-(\xi)}$.

In order to apply Lemma~\ref{lemma1}, we shall estimate
$\mu(D_\chi(z,r))$ for $z\in\CC$ and $0<r\leq \varepsilon/2L$.
(While Lemma~\ref{lemma1} is stated in terms of Euclidean balls,
we may also use the chordal metric, as this 
is the restriction of the Euclidean metric in~$\R^3$ to $S^2=\CC$.)
We choose $l\in\N$ such that $r_{l+1}/2<r\leq r_l/2$.
Then 
\begin{equation}\label{5e}
K^{l+1}
\log\frac{L}{\varepsilon} = \log \frac{1}{r_{l+1}} \geq \log \frac{1}{r} -\log 2.
\end{equation}
Suppose that $\mu_{k+l}  ( D_\chi(z,r))\neq 0$ for some $k\in\N$. Then 
$A_{k+l}\cap D_\chi(z,r)\neq \emptyset$.
We choose $a\in A_{k+l}\cap D_\chi(z,r)$ and put $w=f^l(a)$.
Then $w\in A_k$ and $a=a_{l,j}(w)$ for some $j\in\{1,\dots,N_l(w)\}$.
Since $r\leq r_l/2$ we deduce from \eqref{5c} that 
\[
D_\chi(z,r)\subset D_\chi(a,2r)\subset D_\chi(a,r_l) \subset U_{l,j}(w).
\]
This implies that $a$ is the only point in $A_{k+l}\cap D_\chi(z,r)$
which is mapped onto~$w$ by~$f^l$. Hence 
\[
\card\!\left(A_{k+l}\cap D_\chi(z,r)\right) \leq \card A_k =N_k.
\]
Using \eqref{5d} we deduce that
\[
\mu_{k+l}  ( D_\chi(z,r)) \leq \frac{N_k}{N_{k+l}}
\leq \frac{d^k}{d^{k+l-6}} = \frac{d^7}{d^{l+1}} =
d^7 \left( K^{l+1}\right)^{\displaystyle -\tfrac{\log d}{\log K}} .
\]
Using~\eqref{5e} we see that 
\[
\mu_{k+l}  ( D_\chi(z,r)) \leq 
C \left(  \log \frac{1}{r}\right)^{\displaystyle -\tfrac{\log d}{\log K}}
\]
for some constant~$C$. Clearly, the same estimate is also satisfied by
the limit measure $\mu$. Now~\eqref{1y} follows from
Lemma~\ref{lemma1}.
Since $J(f)= \overline{O^-_f(z)}$ for all $z\in J(f)$, we 
also have $H_h(J(f))>0$. Finally the conclusion about the 
capacity of $ \overline{O^-_f(z)}$ and $J(f)$ follows from
the result of Wallin~\cite{Wallin77} already quoted.\qed

\begin{remark}
Let $f\colon\CC\to\CC$ be a quasiregular map with $\deg(f)>K(f)$,
let $U$ be an open set such $\CC\backslash O^+_f(U)$ has capacity zero and
let $\xi\in \CC\backslash E(f)$.
By Theorem~\ref{thm1}, $\overline{O_f^-(\xi)}$ has positive capacity.
Thus $O^+_f(U)\cap \overline{O_f^-(\xi)} \neq\emptyset$.
Since $O_f^+(U)$ is open we actually have
$O^+_f(U)\cap O_f^-(\xi)\neq\emptyset$, which implies that $\xi\in O^+_f(U)$.
We deduce that $\CC\backslash O^+_f(U)\subset E(f)$ 
and hence $\card \CC\backslash O^+_f(U)\leq 2$ 
whenever $\CC\backslash O^+_f(U)$ has capacity zero.
This shows that the two definitions of $J(f)$ mentioned 
in the introduction agree for  $f\colon\CC\to\CC$ with $\deg(f)>K(f)$.
\end{remark}

\section{Proof of Theorem~\ref{thm2}} \label{proof2}
Let $0<\delta<1/14$ and put $\lambda=2e/\delta$.  First we define $f$ in
\[Z=\C\backslash(D(1,\delta)\cup D(-1,\delta)).\]
In order to do so we put $f(z)=\lambda(z^2-1)$ for 
$z\in \C\backslash (D(1,2\delta)\cup D(-1,2\delta))$, put 
$f(z)=\pm 2\lambda(z \mp 1)$ for $z\in  \partial D(\pm 1, \delta)$,
and define $f$ by interpolation in 
the annuli $D(\pm 1, 2\delta)\backslash \overline{D}(\pm 1,\delta)$; 
that is, we put
\begin{align*}
      f(z) & = \frac{|z\mp 1|-\delta}{\delta} \lambda(z^2-1)\pm \frac{2\delta-|z\mp 1|}{\delta}2\lambda(z\mp 1) \\
           & = \lambda\left((z\mp 1)^2 \frac{|z\mp 1|}{\delta}\pm 2(z\mp 1)-(z\mp 1)^2\right)
\end{align*}
if $\delta\le |z\mp 1|\le 2\delta$.  We now compute the dilatation of $f$ in
the annuli $D(\pm 1, 2\delta)\backslash \overline{D}(\pm 1,\delta)$.
For simplicity, we consider only 
$D(1, 2\delta)\backslash \overline{D}(1,\delta)$, the argument for 
$D(-1, 2\delta)\backslash \overline{D}(-1,\delta)$ being analogous. As
\[
\left|\frac{\partial}{\partial z}|z-1|\right|
=\left|\frac{\partial}{\partial z}\sqrt{(z-1)(\overline{z}-1)}\right|
=\left|\frac{\overline{z}-1}{2|z-1|}\right|
=\frac{1}{2}
\]
and also
\[\left|\frac{\partial}{\partial \overline{z}}|z-1|\right|
=\frac{1}{2},\]
we see that if $\delta<|z-1|<2\delta$, then
\[\left|\frac{\partial f}{\partial\overline{z}}\right|
=\frac{\lambda}{2\delta}|z-1|^2 \le 2\lambda \delta\]
while
\begin{align*}
\left|\frac{\partial f}{\partial z}\right| 
     & =\lambda \left| 2(z-1)\frac{|z-1|}{\delta}+\frac{(z-1)^2}{\delta}\frac{\partial}{\partial z}|z-1|+2-2(z-1)\right|\\
     & \ge \lambda\left(2-2|z-1|-2\frac{|z-1|^2}{\delta}-\frac{|z-1|^2}{2\delta}\right) 
\ge \lambda(2-14 \delta).
\end{align*}
Thus
  \[
\left.\left|\frac{\partial f}{\partial\overline{z}}(z)
\right/\frac{\partial f}{\partial z}(z)\right|
\le \frac{2\lambda\delta}{\lambda(2-14\delta)}=\frac{\delta}{1-7\delta}\]
for $\delta<|z-1|<2\delta$. As mentioned, the same argument shows that
the last inequality also holds for $\delta <|z+1|<2\delta$.
Choosing $\delta$ small we can thus achieve that $f$ is $K$-quasiconformal
in $D(\pm 1, 2\delta)\backslash \overline{D}(\pm 1,\delta)$ and hence in the
interior of~$Z$.

Next we claim that
\begin{equation}\label{2a} |f(z)|\ge 4\quad \mbox{ for }z\in Z.\end{equation}
In fact, if $z\in \C\backslash (D(1,2\delta)\cup D(-1, 2\delta))$, then
both terms $|z+1|$ and $|z-1|$ are greater than or equal to $2\delta$ while at
least one of them is greater than or equal to~$1$, so that
\[
|f(z)|= \lambda|(z+1)(z-1)|\ge 2 \lambda \delta=4e\ge 4.
\]
  If $z\in D(1,2\delta)\backslash D(1,\delta)$, then
\begin{align*}
      |f(z)| & \ge \lambda\left(2|z-1|-|z-1|^2-\frac{|z-1|^3}{\delta}\right)
 \\ & 
         = \lambda |z-1|\left(2-|z-1|-\frac{|z-1|^2}{\delta}\right) 
         \ge \lambda \delta (2-6\delta) 
         = 2e(2-6\delta)
         \ge 4,
\end{align*}
and the same estimate holds for $z\in D(-1,2\delta)\backslash D(-1,\delta)$. Thus \eqref{2a} holds.

If $|z|\ge 4$, then
\[
|f(z)|=\lambda |z^2-1|
\geq \lambda (|z|^2-1)\ge \lambda(|z|-1)(|z|+1)\ge 3\lambda|z|\ge 3|z|.
\]
Hence
\begin{equation}\label{2b} 
|f^k(z)|\ge 3^k|z|\quad \mbox{ for }|z|\ge 4 \mbox{ and } k\in \N.
\end{equation}
It follows from \eqref{2a} and \eqref{2b} that $f(Z)\subset Z$ and
\begin{equation}\label{2c} 
f^k(z)\to \infty\quad\mbox{ for } z\in Z
\end{equation}
as $k\to \infty$.

So far we have defined $f$ only in~$Z$. We now extend $f$ to the disks $D(\pm 1,\delta)$. In order to do so, we put $t_0=4e$, $\alpha=\delta/4$ and define a sequence $(t_n)$ by
\[
t_n=t_0\alpha^n \exp\left(-\frac{K^n-1}{K-1}\right).
\]
Note that $t_1=t_0\alpha e^{-1}=\delta$ and
\[
f(\partial D(\pm1,t_1))=f(\partial D(\pm 1,\delta))=\partial D(0,4e)=\partial D(0,t_0).
\]
More precisely,
\begin{equation}\label{2d} 
f(\pm 1+t_1e^{i\varphi})=\pm t_0e^{i\varphi}.
\end{equation}
We also define sequences $(s_n)_{n\ge 0}$ and $(r_n)_{n\ge 1}$ by
\[s_n=t_n \exp(-K^n)\quad\mbox{ and }\quad r_n=\frac{s_{n-1}}{s_0}\]
so that $r_1=1$. For later use we note that
\begin{equation}\label{2d1} 
\frac{t_{n+1}}{t_n}=\alpha \exp\left(-\frac{K^{n+1}-K^n}{K-1}\right)=\alpha \exp (-K^n)
\end{equation}
and thus
\begin{equation}\label{2e} 
t_{n+1}=\alpha t_n \exp(-K^n)=\alpha s_n
\quad\mbox{ and }\quad r_n=\frac{t_n}{\alpha s_0}.
\end{equation}

For $n\in\N$ we put $\Sigma_n=\{-1,1\}^n$. We also put $\Sigma_0=\{\emptyset\}$
and
\[\Sigma= \bigcup^\infty_{n=1}\Sigma_n\quad\mbox{ and }\quad\Sigma'= \bigcup^\infty_{n=0}\Sigma_n=\Sigma \cup\{\emptyset\}.\]
The shift $\sigma\colon \Sigma\to \Sigma'$ is defined by
\[
\sigma(u_1,\ldots,u_n)=(u_2,\cdots,u_n)
\]
for $n\ge 2$. We also define $\tau\colon \Sigma\to \Sigma'$ by 
\[
\tau(u_1,\ldots,u_n) =u_1\sigma(u_1,\ldots, u_n) =(u_1u_2, u_1u_3,\ldots,u_1u_n)
\]
for $n\ge 2$. For $n=1$ we put $\sigma(1)=\tau(1)=\sigma(-1)=\tau(-1)=\emptyset$. Thus $\sigma(\Sigma_n)=\tau(\Sigma_n)=\Sigma_{n-1}$ for all $n\in \N$.

We define $a\colon \Sigma'\to \C$, writing $a_u$ instead of $a(u)$, by $a_\emptyset=0$ and 
\[a_u
=\sum^n_{j=1} u_jr_j\quad\mbox{for }u=(u_1,\ldots,u_n)\in \Sigma_n
\mbox{ where }  n\in \N.\]
For $n\in\N$,
$u=(u_1,\ldots,u_n)\in \Sigma_n$ and $\varepsilon\in \{-1,1\}$ we write 
\[
(u,\varepsilon)=(u_1,\ldots,u_n,\varepsilon)\in \Sigma_{n+1}
\]
and put
\[X(u)=D(a_u,s_n)\backslash (D(a_{(u,1)},t_{n+1})\cup D(a_{(u,-1)},t_{n+1}))\]
and 
\[Y(u)=D(a_u,t_n)\backslash D(a_u,s_n),\]
see Figure~\ref{fig1}.
\begin{figure}[htb] 
\centering
\setlength{\unitlength}{60pt}
\begin{picture}(4.3,4.3)(-2.15,-2.15)
\put(0,0){\circle*{0.03}}
\put(0.03,-0.1){$a_u$}
\put(0,0){\circle{3.6}}
\put(0,0){\circle{4.3}}
\put(-1,0){\circle*{0.03}}
\put(1,0){\circle*{0.03}}
\put(-1,0){\circle{0.6}}
\put(-1,0){\circle{0.9}}
\put(-1.166,0){\circle{0.15}}
\put(-0.833,0){\circle{0.15}}
\put(1,0){\circle{0.6}}
\put(1,0){\circle{0.9}}
\put(1.166,0){\circle{0.15}}
\put(0.833,0){\circle{0.15}}
\put(0,1){$X(u)$}
\put(0,1.88){$Y(u)$}
\put(-2.75,1.35){$Y(u,\!-1)$}
\put(-2.0,1.36){\vector(1,-1){1.0}}
\put(-3.1,0.1){$X(u,\!-1)$}
\put(-2.3,0.14){\vector(1,0){1.3}}
\put(-2.3,-1.3){$a_{(u,\!-1)}$}
\put(-2.16,-1.16){\vector(1,1){1.13}}
\put(2.05,1.35){$Y(u,\!1)$}
\put(2.0,1.36){\vector(-1,-1){1.0}}
\put(2.3,0.1){$X(u,\!1)$}\put(2.3,0.14){\vector(-1,0){1.3}}
\put(-2.3,-1.3){$a_{(u,\!-1)}$}
\put(2.2,-1.3){$a_{(u,1)}$}
\put(2.16,-1.16){\vector(-1,1){1.13}}
\end{picture}
\caption{$X(u)$ and $Y(u)$, not to scale: the actual annuli $Y(u)$ are much
wider and the $X(u,\pm 1)$ and $Y(u,\pm 1)$ are much smaller in comparison 
to $X(u)$ and $Y(u)$ than shown.}
\label{fig1}
\end{figure}
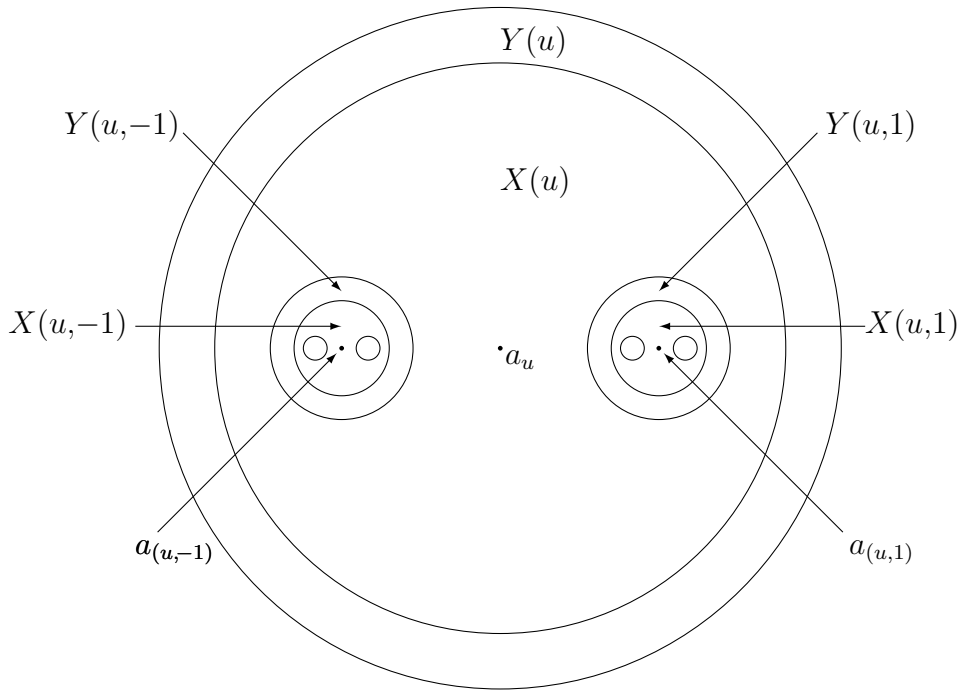

Let
\[
X=\bigcup_{u\in \Sigma}X(u)\quad\mbox{and}\quad Y=\bigcup_{u\in \Sigma} Y(u).
\]
We extend $f$ to $X\cup Y$ in such a way that $f$ maps $X(u)$ to $X(\tau(u))$
and $Y(u)$ to $Y(\tau(u))$ for all $u\in \Sigma$. Moreover, $f$ is an affine
function on
each $X(u)$ and $f$ is a suitably rescaled version of the function
$z\mapsto z|z|^{1/K-1}$ on each $Y(u)$, implying that $f$ is conformal on
$X$ and quasiconformal on $Y$ with $K(f)=K$. More precisely, we put 
\begin{equation}\label{2f} f(z)
=a_{\tau(u)}+u_1\frac{s_{n-1}}{s_n}(z-a_u)\quad \mbox{for }z\in X(u)
\end{equation}
and
\begin{equation}\label{2g} f(z)=a_{\tau(u)}+u_1\frac{s_{n-1}}{s_n^{1/K}}|z-a_u|^{1/K-1}(z-a_u)\quad \mbox{for }z\in Y(u).\end{equation}
Note that the expressions given by \eqref{2f} and \eqref{2g} agree for
$z\in \partial X(u)\cap \partial Y(u)$, since for such $z$ we have
$|z-a_u|=s_n$ and hence 
\[
\frac{s_{n-1}}{s_n^{1/K}}|z-a_u|^{1/K-1}=\frac{s_{n-1}}{s_n^{1/K}}s_n^{1/K-1}=\frac{s_{n-1}}{s_n}.
\]
The two expressions for $f$
also agree for $z\in \partial X(u)\cap \partial Y(u,\varepsilon)$ where
$\varepsilon\in \{1,-1\}$. In this case we have 
$|z-a_{(u,\varepsilon)}|=t_{n+1}$.
Thus, by \eqref{2d1} and \eqref{2e}, 
\begin{align*}
\frac{s_{n}}{s_{n+1}^{1/K}} |z-a_{(u,\varepsilon)}|^{1/K-1}
& =\frac{s_{n}}{s_{n+1}^{1/K}}t_{n+1}^{1/K-1} 
 =\frac{s_n}{t_{n+1}}\left(\frac{t_{n+1}}{s_{n+1}}\right)^{1/K}
=\frac{1}{\alpha}\left(\exp(-K^{n+1})\right)^{1/K}
 \\ & 
= \frac{1}{\alpha}\exp(-K^n)
=\frac{t_n}{t_{n+1}}
=\frac{s_{n-1}}{s_n}
=\frac{r_n}{r_{n+1}}
\end{align*}
so that
\begin{align*}
   a_{\tau(u,\varepsilon)}+ u_1\frac{s_n}{s_{n+1}^{1/K}}
             |z-a_{(u,\varepsilon)}|^{1/K-1}(z-a_{(u,\varepsilon)})  
&  =  u_1a_{\sigma(u,\varepsilon)}+ u_1\frac{s_{n-1}}{s_n} (z-a_u-r_{n+1}\varepsilon)  \\
   & =  u_1a_{\sigma(u,\varepsilon)}+ u_1\frac{s_{n-1}}{s_n}(z-a_u)-u_1r_{n}\varepsilon \\
   & =  u_1(a_{\sigma(u,\varepsilon)}-r_n\varepsilon)+ u_1\frac{s_{n-1}}{s_n}(z-a_u) \\
   & =  a_{\tau(u)}+u_1 \frac{s_{n-1}}{s_n}(z-a_u).
\end{align*}
Hence the expressions given by \eqref{2f} and \eqref{2g} agree for 
$z\in \partial X(u)\cap \partial Y(u,\varepsilon)$.
    
We deduce that $f$ is continuous and  in fact quasiregular
with $K(f)=K$ on $X\cup Y$.
Moreover, we can deduce from \eqref{2d} and \eqref{2g} that $f$ is 
continuous on 
\[
\partial D(1,t_1)\cup \partial D(-1,t_1)=\partial Y\cap \partial Z.
\]
Thus $f$ is continuous and quasiregular on  $X\cup Y\cup Z$,
which is the set where $f$ has been defined so far.
    
In order to define $f$ on $\C\backslash (X\cup Y\cup Z)$ we denote by
$\Sigma_\infty$ the set of all sequences $(u_k)_{k\in \N}$ with
$u_k\in \{-1,1\}$ for all $k\in \N$. The shift
$\sigma\colon \Sigma_\infty\to \Sigma_\infty$ and the map
$\tau\colon \Sigma_\infty\to \Sigma_\infty$ are defined as before; that is,
\[
\sigma((u_k)_{k\in \N})=(u_{k+1})_{k\in \N} \quad
\mbox{ and } \quad
\tau((u_k)_{k\in \N})=u_1\sigma((u_k)_{k\in \N}).
\]
For $u=(u_k)_{k\in \N}\in \Sigma_\infty$ we put
$a_u=\sum^\infty_{k=1}u_kr_k$. Moreover, we put
\[
C=\{a_u\colon u\in \Sigma_\infty\}. 
\]
Noting that $\diam X(u)=2s_n$
for $u\in \Sigma_n$ and $s_n\to 0$ as $n\to \infty$, we easily see that
$C=\C\backslash (X\cup Y\cup Z)$ and that $f$ extends continuously and in fact
quasiregularly to $\C$ by putting $f(a_u)=a_{\tau(u)}$ for $u\in \Sigma_\infty$.
Moreover, the extended map satisfies $K(f)=K$.

We mention that the existence of a quasiregular extension of $f$ from 
the domain
$X\cup Y\cup Z=\C\backslash C$ to
$\C$ also follows from a general removability result for quasiregular 
maps~\cite[Corollary~ 1.5]{Astala94}, together with the assertion that 
$\dim C=0$ proved below.

For $u\in \Sigma_n$ we have $f(X(u))=X(\tau(u))$ and $f(Y(u))=Y(\tau(u))$
so that 
\[
f^n(X(u))=X(\emptyset)=D(0,s_0)\backslash(D(1,t_1)\cup  D(-1,t_1))
\]
and
\[
f^n(Y(u))=Y(\emptyset)=D(0,t_0)\backslash D(0,s_0) .
\]
Hence $f^k(z)\to \infty$ for $z\in X\cup Y$ by \eqref{2c}. On the other hand, $f(C)=C$. We deduce that $BO(f)=C$.
    
In order to estimate the Hausdorff measure of $C$ we note that 
\[C\subset \bigcup_{u\in \Sigma_n}D(a_u,s_n)\]
for all $n\in \N$. 
For the function $h$ defined by \eqref{1a} we thus have
\begin{align*}
        \sum_{u\in \Sigma_n} h(\diam D(a_u,s_n)) & = 2^n h(2s_n) 
  = 2^n h\left(\frac{2t_{n+1}}{\alpha}\right) 
  = 2^n\left(\log \frac{\alpha}{2 t_{n+1}}\right)^{\displaystyle -\tfrac{\log 2}{\log K}} \\
& = 2^n\left(-\log 2t_0-n\log \alpha 
+\frac{K^{n+1}-1}{K-1}\right)^{\displaystyle -\tfrac{\log 2}{\log K}} \\
& = \left(K^{-n}\left(-\log 2 t_0-n\log \alpha\right)
+\frac{K-K^{-n}}{K-1}\right)^{\displaystyle -\tfrac{\log 2}{\log K}}. 
\end{align*}
We deduce that 
\[
H_h(C)\le \left(\frac{K}{K-1}\right)^{\displaystyle -\tfrac{\log 2}{\log K}}<\infty,
\]
from which the conclusion follows since $C=BO(f)$.\qed

\end{document}